\newtheorem{theorem}{Theorem}[section]
\newtheorem{lemma}[theorem]{Lemma}
\newtheorem{proposition}[theorem]{Proposition}
\newtheorem{corollary}[theorem]{Corollary}
\newtheorem{question}[theorem]{Question}
\theoremstyle{definition}
\newtheorem{example}[theorem]{Example}
\newtheorem{remark}[theorem]{Remark}
\theoremstyle{remark}
\numberwithin{equation}{section}
\newcommand{\sna}{\operatorname{SNA}}
\newcommand{\na}{\operatorname{NA}}
\newcommand{\lip}{\mathrm{Lip}_0}
\newcommand{\Mol}{\mathrm{Mol}}
\newcommand{\cconv}{\overline{\mathrm{conv}}}
\newcommand{\N}{{\mathbb{N}}}
\title[Linear spaces of SNA Lipschitz functionals]{Closed linear spaces consisting of strongly norm attaining Lipschitz functionals}
\author[V.\ Kadets]{Vladimir Kadets}
\address[Vladimir Kadets]{School of Mathematics and Computer Sciences, V.N. Karazin Kharkiv National University, 4 Svobody Sq, 61022 Kharkiv, Ukraine.
\href{https://orcid.org/0000-0002-5606-2679}{ORCID: \texttt{0000-0002-5606-2679} }}
\email{v.kateds@karazin.ua}
\author[\'O.\ Rold\'an]{\'Oscar Rold\'an}
\address[\'Oscar Rold\'an]{Departamento de An\'{a}lisis Matem\'{a}tico,
Universidad de Valencia, Doctor Moliner 50, 46100 Burjasot (Valencia), Spain.
\href{https://orcid.org/0000-0002-1966-1330}{ORCID: \texttt{0000-0002-1966-1330} }}
\email{oscar.roldan@uv.es}
\thanks{The first author was supported by the National Research Foundation of Ukraine funded by Ukrainian State budget as part of the project 2020.02/0096 ``Operators in infinite-dimensional spaces: the interplay between geometry, algebra and topology''. The second author was supported by the Spanish Ministerio de Universidades, grant FPU17/02023, and by the Ministerio de Ciencia e Innovación and Fondo Europeo de Desarrollo Regional project MTM2017-83262-C2-1-P / MCIN / AEI / 10.13039 / 501100011033 (FEDER)}
\subjclass[2020]{Primary: 46B04;  Secondary: 46B20, 46B87}
\date{\today}
\keywords{Norm-attaining Lipschitz functionals, Lipschitz-free Banach space}
\begin{document}
	
\begin{abstract}
Given a pointed metric space $M$, we study when there exist $n$-dimensional linear subspaces of $\lip(M)$ consisting of strongly norm-attaining Lipschitz functionals, for $n\in\mathbb{N}$. We show that this is always the case for infinite metric spaces, providing a definitive answer to the question. We also study the possible sizes of such infinite-dimensional closed linear subspaces $Y$, as well as the inverse question, that is, the possible sizes of the metric space $M$ given that such a subspace $Y$ exists. We also show that if the metric space $M$ is $\sigma$-precompact, then the aforementioned subspaces $Y$ need to be always separable and isomorphically polyhedral, and we show that for spaces containing $[0,1]$ isometrically, they can be infinite-dimensional.
\end{abstract}

\maketitle
\section{Introduction}\label{section:introduction}

\subsection{Motivation}\label{subsection:motivation}
According to Rmoutil's result \cite{Rmoutil17}, there exists an infinite-dimensional Banach space $X$ (namely $c_0$ in the equivalent norm constructed by Read \cite{Read}) such that the set  NA$(X) \subset X^*$ of norm attaining linear functionals does not contain two-dimensional linear subspaces. That was a negative answer to \cite[Problem III]{Godefroy01} by Godefroy. Read's construction was generalized in \cite{KLMD}, where such equivalent norms with ``extremely nonlineable set of norm attaining functionals'' were constructed in a number of other Banach spaces, in particular in all separable and some non-separable Banach spaces containing $c_0$.

We address an analogous question for metric spaces $M$ and the set $\sna(M)$ of strongly norm attaining Lipschitz functionals. Surprisingly for the authors, for Lipschitz functionals the answer happens to be just the opposite: for every infinite $M$ the corresponding set $\lip(M)$ always has linear subspaces of dimension at least 2 consisting of strongly norm-attaining functionals, and in fact, it contains such subspaces of arbitrarily big finite dimension. After figuring out this new fact, we study some other natural questions about possible sizes of closed linear subspaces in $\sna(M)$.

It was shown in \cite[Theorem 3.2]{CDW16} that if $M$ is an infinite metric space, then $\lip(M)$ contains linear subspaces isomorphic to $\ell_\infty$, and later, an isometric version of this result was given in \cite[Theorem 5]{CJ17}. However, the proofs cannot be adapted to the setting of strongly norm-attaining Lipschitz mappings in general and, as we show in Theorem \ref{theorem:inverse-problem}, for separable $M$, such a big subspace as  $\ell_\infty$ cannot be squeezed in $\sna(M)$.

\subsection{Notation and preliminaries}

Let us quickly recall some standard notation from Banach space theory (see \cite{FHHMZ11} for instance). All vector spaces in this document are assumed to be real. Let $X$ be a Banach space (over the field $\mathbb{R}$ of real numbers). We denote by $X^*$, $B_X$ and $S_X$ its topological dual, its closed unit ball, and its unit sphere, respectively. The notations $\mathrm{conv}(A)$ and $\mathrm{aconv}(A)$ respectively mean the convex hull and the absolute convex hull of $A$. Their closures with respect to a topology $\tau$ are denoted by $\overline{\mathrm{conv}}^\tau(A)$ and $\overline{\mathrm{aconv}}^\tau(A)$, respectively, and when $\tau$ is the usual topology, it will be omitted. If $X$ and $Y$ are two Banach spaces, $\mathcal{L}(X, Y)$ is the space of bounded linear operators from $X$ to $Y$, and $\operatorname{NA}(X, Y)$ is the subset of $\mathcal{L}(X, Y)$ consisting of all those operators that attain their norms. We will use  without additional explanation the standard notation like $c_0$, $\ell_1$, $\ell_p$, $\ell_\infty$, $L_\infty[0,1]$ for corresponding classical Banach spaces. For $n\in\mathbb{N}$, $\ell_1^n$ denotes the $n$-dimensional version of $\ell_1$, that is the space $\mathbb{R}^n$ endowed with the norm 
$$
\|(x_1, \ldots, x_n)\|_1 = \sum_{k=1}^n |x_k|,
$$
and $\ell_\infty^n$ is defined analogously with the norm $\|\cdot\|_\infty$,
 $$
\|(x_1, \ldots, x_n)\|_\infty = \max_{1\le k \le n} |x_k|.
$$
Recall also that $\ell_1^2$ is isometrically isomorphic to $\ell_\infty^2$.

We will also use the following concepts. Given a Banach space $X$, a set $B\subset B_{X^*}$ is a \textit{James boundary of $X$} if for every $x\in X$, there is $g\in B$ such that $g(x)=\|x\|$ (see \cite[Definition 3.118]{FHHMZ11}). A metric space is said to be \textit{$\sigma$-precompact} if it is a countable union of precompact sets, and a metric space $M$ is said to have the \textit{small ball property} if for every $\varepsilon_0>0$, it is possible to write $M$ as a union of a sequence $(B(x_n, r_n))_n$ of closed balls such that $r_n<\varepsilon_0$ for all $n$ and $r_n\xrightarrow[n\to\infty]{} 0$. It is known that $\sigma$-precompact spaces have the small ball property but the converse is not true in general (see \cite[Theorem 5.2]{BeKa01}). Finally, recall that a Banach space $X$ is \textit{polyhedral} if the unit ball of every finite-dimensional subspace of $X$ is a polytope. A space that is isomorphic to a polyhedral space is said to be \emph{isomorphically polyhedral}.

Let $(M,\rho)$ be a pointed metric space (that is, a metric space consisting of at least 2 points and containing a distinguished point $0$). We will usually consider only one metric on $M$ which permits to write just $M$ for the metric space instead of $(M,\rho)$. We use the standard notation $\lip(M)$ for the space of all Lipschitz mappings $f:M\rightarrow \mathbb{R}$ such that $f(0)=0$ endowed with the Lipschitz constant as the norm, that is
$$\|f\| = \sup \left\{  \frac{\left|f(x)-f(y)\right|}{\rho(x,y)}  :\, x,y\in M,\, x\neq y\right\}.$$
The interested reader can find a detailed study of Lipschitz spaces in the book \cite{Weaver99}.

There is a natural way to define norm-attainment in this context. According to \cite{KMS16}, a Lipschitz functional $f\in\lip(M)$ is said to \textit{attain its norm strongly} if there is a pair of points $x,y\in M$ with $x\neq y$ such that $$\|f\| = \frac{\left| f(x)-f(y)\right|}{\rho(x,y)} .$$

We will denote the set of Lipschitz functionals from $\lip(M)$ that attain their norm strongly by $\sna(M)$ (the notations $\operatorname{SA}(M)$ and $\operatorname{LipSNA}(M)$ have also been used in the literature). 

The reason behind calling this natural norm-attainment \textit{strong} is that this is a restrictive notion, and other weaker notions of norm-attainment, which are also natural and give interesting results, have also been introduced and studied since the initial works on the topic \cite{Godefroy16,KMS16} (see also \cite[Section 1]{CCM20} for a very clean exposition of various kinds of norm-attainment for Lipschitz mappings and the relations between them).

The systematic study of norm-attaining Lipschitz mappings was started in \cite{Godefroy16} and \cite{KMS16}. Since then, a fruitful line of research arose and continues to be very active nowadays. As we just mentioned, the notion of strong norm-attainment is a bit restrictive. This can be justified by the following facts:
\begin{itemize}
\item If a Lipschitz functional $f$ attains its norm at some pair of points $x\neq y$, then it also has to attain its norm at any pair of different points in between them (see \cite[Lemma 2.2]{KMS16} for the details).
\item If $M$ is a metric length space (that is, if for every $x\neq y\in M$, the distance $\rho(x,y)$ is equal to the infimum of the length of rectifiable curves joining them; note that every normed space is a metric length space), then $\sna(M)$ is never dense in $\lip(M)$ (for the details, see \cite[Theorem 2.2]{CCGMR19}, which improves \cite[Theorem 2.3]{CCGMR19}, and check also \cite{AMC19, GLPRZ18, IKW07} for more background and characterizations on metric length spaces).
\end{itemize}
However, despite that, positive results have also been achieved in this direction in the recent years for some metric spaces (see for instance \cite[Section 3]{CCGMR19} and \cite{CM19}). Evidently, if $M$ is finite, then $\sna(M) = \lip(M)$ is a linear space, so we are mainly interested in infinite metric spaces.

An important tool in the study of Lipschitz mappings is the concept of \textit{Lipschitz-free spaces} (also referred to as \textit{Arens-Eells spaces} and \textit{Transportation cost spaces} in the literature). Given a metric space $M$, denote $\delta:M\rightarrow (\lip(M))^*$ the canonical embedding given by $\delta(x) = \delta_x$, $x\in M$, where $\delta_x$ is the evaluation functional $f \mapsto f(x)$. Then the norm-closed linear span  $\mathcal{F}(M)$ of $\delta(M)$ in $\lip(M)$  is called the Lipschitz-free space over $M$. The space $\mathcal{F}(M)$ can be seen isometrically as a predual of $\lip(M)$ (see \cite[Section 1]{CCGMR19} and the survey \cite{Godefroy15} for a solid background on Lipschitz-free spaces). The identification $(\mathcal{F}(M))^* = \lip(M)$ can be explained as follows: every Lipschitz mapping $f:M\rightarrow \mathbb{R}$ can be identified with the continuous linear mapping $\hat{f}:\mathcal{F}(M)\rightarrow \mathbb{R}$ given by $\hat{f}(\delta_p)\mapsto f(p)$ for $p\in M$ and extended to the whole $\mathcal{F}(M)$ by linearity and continuity. This identifies isometrically the spaces $\lip(M)$ and $\mathcal{L}(\mathcal{F}(M), \mathbb{R}) = (\mathcal{F}(M))^*$. It is easy to check that $\sna(M)$ can be  identified with the set of those elements of $\mathcal{L}(\mathcal{F}(M), \mathbb{R})$ that attain their norm at a point of the form $\frac{\delta_x-\delta_y}{\rho(x, y)}$, for $x\neq y\in M$. This identification has been used to get many results about strongly norm-attaining Lipschitz mappings (see for instance \cite[Section 3]{CCGMR19}, \cite[Section 2]{CGMR21}, \cite{CM19}, \cite[Section 7]{GPPR18} and \cite[Section 4]{GPR17}). We refer to \cite{CJ17,KMO20,OO20} for works where the possibility to embed $\ell_1$ into Lipschitz-free spaces was studied in depth.

Remark also, that the structure of $\lip(M)$ and $\sna(M)$ does not depend on the selection of the distinguished point $0$: if $M'$ is the same metric space but with another distinguished point $0'$ then the mapping $f \mapsto f - f(0')$ is a bijective linear isometry between $\lip(M)$ and $\lip(M')$, which maps $\sna(M)$ to $\sna(M')$.

Another important tool is the well known \textit{McShane's extension theorem} \cite[Theorem 1.33]{Weaver99} that allows us to extend $f\in \lip(M_1)$ to $\tilde{f}\in\lip(M_2)$, with $M_1\subset M_2$, in such a way that $\|f\|=\|\tilde{f}\|$.

Given a metric space $M$, in this text, the expression \textit{linear subspaces of $\sna(M)$} should be understood as linear subspaces of $\lip(M)$ consisting of strongly norm-attaining Lipschitz functionals. Also, we use below the following slang. Let $Y$ be a Banach space and $M$ be a pointed metric space. We say that $Y$ embeds in  $\sna(M)$ (or equivalently $\sna(M)$ contains a copy of $Y$), if there is a linear isometric embedding  $U: Y \to \lip(M)$ such that $U(Y) \subset \sna(M)$
 
\subsection{The structure of the article}
The rest of the paper is structured in 3 sections as follows. 

In Section \ref{section:finite-dim-subs}, we ask for what metric spaces $M$ there exist $n$-dimensional linear subspaces inside of $\sna(M)$, for $n\in\mathbb{N}$. The main result of the section, Theorem \ref{theorem:finite-dimensional-subspace}, asserts that if $M$ contains at least $2^n$ points, then $\sna(M)$ contains a subspace isometric to $\ell_1^n$, and so, Corollary \ref{corollary:n-dim-subs} characterizes that $\sna(M)$ contains $n$-dimensional linear subspaces if and only if $M$ has more than $n$ points, providing a definitive answer to our question.

In Section \ref{section:size-subs} we study possible sizes of linear subspaces of $\sna(M)$ for metric spaces $M$, and in particular, if there can be infinite-dimensional or even non-separable subspaces. We show in Proposition \ref{proposition:size-subspaces} that, actually, any Banach space $Y$ is subspace of $\sna(M)$ for a suitable metric space $M$. We also tackle some kind of an inverse problem: how ``small'' a metric space $M$ can be so that a given Banach space $Y$ is a subspace of $\sna(M)$. We give a characterization for this in Theorem \ref{theorem:inverse-problem}. We also study similar questions for some particular classes of metric spaces. In particular, we show in Theorem \ref{proposition:sigma-compact} that if $M$ is a $\sigma$-precompact pointed metric space, then any closed linear subspace $Y$ of $\sna(M)$ is separable and isomorphically polyhedral. We also show in Proposition \ref{proposition:containing-[0,1]} that in any metric spaces that contain $[0,1]$ isometrically, such closed linear subspaces can be chosen to be infinite-dimensional.

Finally, in Section \ref{section:questions}, we exhibit some remarks and questions that remain open for now despite our attempts to solve them.

\section{Finite-dimensional subspaces}\label{section:finite-dim-subs}

In this section, we will study the existence of $n$-dimensional linear subspaces in $\sna(M)$, where $M$ is a pointed metric space and $n\in\mathbb{N}$. Our main result from the section states that if $M$ contains at least $2^n$ points (in particular, if $M$ is infinite), then $\sna(M)$ contains an isometric copy of $\ell_1^n$ (see Theorem \ref{theorem:finite-dimensional-subspace}). This provides a shocking contrast when compared to the classical theory of norm-attaining functionals, where Rmoutil showed that an infinite-dimensional Banach space $X$ introduced by Read satisfied that $\operatorname{NA}(X, \mathbb{R})$ has no $2$-dimensional subspaces (see \cite{Rmoutil17}). In order to prove our main result in this direction, we need a bit of preparatory work. 

First of all, recall that if a finite pointed metric space $M$ has exactly $n>1$ distinct points, for some $n\in\mathbb{N}$, then $\lip(M)=\sna(M)$ is an $(n-1)$-dimensional Banach space. 

\begin{remark}
Note that, in general, we cannot claim that if a Banach space $Y$ is a linear subspace of $\sna(K)$ for some metric space $K$, then $Y$ is also linearly isometric to a subspace of $\sna(M)$ for metric spaces $M$ containing $K$ as a subspace. One may be tempted to use McShane's extension theorem in order to try to get such a result, but the extensions do not behave well like a linear subspace in general. However, the well-behaving norm $\|\cdot\|_1$ will allow us to get a result in this direction, as Lemma \ref{lemma:Att_1-v2} below shows.
\end{remark}

\begin{lemma}\label{lemma:Att_1-v2}
Let $M$ be a pointed metric space such that for some subspace $K$ of $M$, $\sna(K)$ contains a linear subspace isometrically isomorphic to $\ell_1^n$ for some $n\in\mathbb{N}$. Then, $\sna(M)$ also contains a linear subspace isometrically isomorphic to $\ell_1^n$.
\end{lemma}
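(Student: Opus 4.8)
The plan is to extend a basis of the given $\ell_1^n$-subspace one functional at a time by McShane's theorem, and to check that the $\ell_1$-geometry forces both the isometry and the strong norm-attainment to survive the extension. We may assume the distinguished point $0$ lies in $K$. Let $Y \subseteq \lip(K)$ be the subspace with $Y \subset \sna(K)$ that is isometric to $\ell_1^n$, and fix functionals $f_1, \dots, f_n \in Y$ that are the images of the canonical basis of $\ell_1^n$, so that $\|\sum_{k=1}^n a_k f_k\|_{\lip(K)} = \sum_{k=1}^n |a_k|$ for all scalars $a_1,\dots,a_n$, and each such combination attains its norm strongly at some pair of points of $K$. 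Using McShane's extension theorem, I would extend each $f_k$ individually to $\tilde f_k \in \lip(M)$ with $\|\tilde f_k\| = \|f_k\| = 1$ and $\tilde f_k(0)=0$. The claim is that $\mathrm{span}\{\tilde f_1, \dots, \tilde f_n\}$ is the desired subspace.

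First I would verify the isometry. For any scalars $a_1, \dots, a_n$, the restriction of $\sum_k a_k \tilde f_k$ to $K$ equals $\sum_k a_k f_k$, and since restricting cannot increase the Lipschitz constant we get the lower bound $\|\sum_k a_k \tilde f_k\|_{\lip(M)} \geq \|\sum_k a_k f_k\|_{\lip(K)} = \sum_k |a_k|$. The matching upper bound is immediate from the triangle inequality, $\|\sum_k a_k \tilde f_k\| \leq \sum_k |a_k|\,\|\tilde f_k\| = \sum_k |a_k|$. Hence $\|\sum_k a_k \tilde f_k\|_{\lip(M)} = \sum_k |a_k|$, so $e_k \mapsto \tilde f_k$ is a linear isometry of $\ell_1^n$ onto $\mathrm{span}\{\tilde f_1, \dots, \tilde f_n\}$; in particular the $\tilde f_k$ are linearly independent.

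Then I would check strong norm-attainment. Given a nonzero combination, the element $\sum_k a_k f_k$ attains its norm strongly in $K$, so there are points $x \neq y$ in $K$ with $|\sum_k a_k f_k(x) - \sum_k a_k f_k(y)| = \big(\sum_k |a_k|\big)\rho(x,y)$. Because $x, y \in K$ and each $\tilde f_k$ extends $f_k$, we have $\tilde f_k(x) = f_k(x)$ and $\tilde f_k(y) = f_k(y)$; combined with the norm computation above, this shows that $\sum_k a_k \tilde f_k$ attains its (equal) norm strongly at the very same pair $x, y \in K \subset M$. Thus the whole span lies in $\sna(M)$.

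The point worth stressing — and the reason the preceding remark is relevant — is that extending a whole basis by McShane's theorem need not preserve the linear structure for an arbitrary norm, since one controls each extended functional only separately. The main (and essentially only) obstacle is the upper norm bound, and for the $\ell_1$-norm it evaporates: the triangle inequality alone yields $\sum_k |a_k|$, matching the lower bound coming from the restriction, so no genuine difficulty arises, and the strongly-attaining pair is simply carried over from $K$.
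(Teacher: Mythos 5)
Your proof is correct and takes essentially the same approach as the paper's: extend the $\ell_1^n$-basis functionals one at a time by McShane's theorem, get the upper bound $\sum_k |a_k|$ from the triangle inequality, and use the pair $x\neq y$ in $K$ at which $\sum_k a_k f_k$ strongly attains its norm to obtain both the matching lower bound and strong attainment of the extension. The only cosmetic difference is your explicit assumption that the distinguished point lies in $K$, which the paper covers by its earlier remark that $\lip(M)$ and $\sna(M)$ are unaffected by moving the base point.
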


\begin{proof}
Indeed, let $E\subset \lip(K)$ be a linear isometric copy of $\ell_1^n$ consisting of strongly norm-attaining functionals. Then, there are $f_1,\ldots,f_n\in S_E\subset S_{\lip(K)}$ such that for all $a_1,\ldots,a_n\in \mathbb{R}$,
$$\left\| \sum_{k=1}^n a_k f_k\right\| = \sum_{k=1}^n|a_k|.$$
Let $g_1,\ldots,g_n\in S_{\lip(M)}$ be norm-preserving extensions of $f_1,\ldots,f_n$ respectively. Then, by the triangle inequality, for all $a_1,\ldots,a_n\in\mathbb{R}$,
$$\left\| \sum_{k=1}^n a_k g_k\right\| \leq \sum_{k=1}^n|a_k|.$$
On the other hand, there is a pair of different points $t_1,t_2\in K$ at which $\sum_{k=1}^n a_kf_k$ attains its norm strongly. This gives us
\begin{align*}
\left\|\sum_{k=1}^n a_kg_k\right\| &\geq \frac{\left|\left(\sum_{k=1}^n a_kg_k\right)(t_1) - \left(\sum_{k=1}^n a_kg_k\right)(t_2)\right|}{\rho(t_1, t_2)}\\
&= \frac{\left|\left(\sum_{k=1}^n a_kf_k\right)(t_1) - \left(\sum_{k=1}^n a_kf_k\right)(t_2)\right|}{\rho(t_1, t_2)}=\sum_{k=1}^n |a_k|,
\end{align*}
so $\left\|\sum_{k=1}^n a_kg_k \right\|=\sum_{k=1}^n |a_k|$, and the norm is attained strongly.
\end{proof}

In particular, if we were able to embed $\ell_1^n$ spaces isometrically in $\sna(M)$ for a finite pointed metric space $M$, we could use the previous lemma to obtain the same result for all metric spaces containing $M$. 

In the recent works \cite{KMO20} and \cite{OO20}, the existence of $\ell_1^n$ and $\ell_1$ subspaces of Lipschitz-free spaces was studied in depth, providing an answer to \cite[Question 2]{CJ17}. This has proven to be an important tool in our case, as we will use the cited below first half of \cite[Theorem 14.5]{KMO20}  in the proof of our main result. 

\begin{lemma}[{\cite[Theorem 14.5]{KMO20}}]\label{lemma:ostrovskii}
For every $n\in\mathbb{N}$, if a pointed metric space $M$ contains $2n$ elements, then $\mathcal{F}(M)$ contains a $1$-complemented subspace isometric to $\ell_1^n$.
\end{lemma}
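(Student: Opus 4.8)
The plan is to reduce the statement to the construction of a single nonexpansive map from $M$ into $\ell_1^n$ with prescribed behaviour on $n$ chosen pairs of points, and to read off both the isometric copy and its norm-one projection from that map. Concretely, suppose I can select $n$ pairwise disjoint pairs $\{x_i,y_i\}_{i=1}^n$ among the $2n$ points and functions $f_1,\dots,f_n\in\lip(M)$ with the two properties that (i) $f_i$ has slope $1$ across its own pair and slope $0$ across the others, i.e. $f_i(x_j)-f_i(y_j)=\rho(x_j,y_j)$ when $i=j$ and $=0$ when $i\neq j$, and (ii) every signed combination is nonexpansive, $\bigl\|\sum_i \varepsilon_i f_i\bigr\|\le 1$ for all $\varepsilon\in\{-1,1\}^n$. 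Writing $u_i=\frac{\delta_{x_i}-\delta_{y_i}}{\rho(x_i,y_i)}\in\mathcal F(M)$, property (i) gives $\langle f_i,u_j\rangle=1$ for $i=j$ and $0$ otherwise, and then for scalars $a_i$ the choice $\varepsilon_i=\operatorname{sign}(a_i)$ together with (ii) yields $\sum_i|a_i|=\bigl\langle\sum_i\varepsilon_i f_i,\sum_j a_j u_j\bigr\rangle\le\bigl\|\sum_j a_j u_j\bigr\|\le\sum_i|a_i|$, so $\operatorname{span}\{u_i\}$ is isometric to $\ell_1^n$. Moreover $P\mu=\sum_i\langle f_i,\mu\rangle u_i$ is a projection onto this span with $\|P\mu\|=\sum_i|\langle f_i,\mu\rangle|=\max_\varepsilon\langle\sum_i\varepsilon_i f_i,\mu\rangle\le\|\mu\|$, hence $\|P\|=1$. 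Finally note that (i)--(ii) are exactly equivalent to exhibiting a $1$-Lipschitz map $\Phi=(f_1,\dots,f_n)\colon M\to\ell_1^n$ with $\Phi(x_i)-\Phi(y_i)=\rho(x_i,y_i)\,e_i$, since $\sum_i|f_i(a)-f_i(b)|\le\rho(a,b)$ is precisely the nonexpansiveness of $\Phi$. Thus the whole problem becomes a metric extension problem into $\ell_1^n$.

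So the real work is to choose the matching and build $\Phi$. I would attempt this by induction on $n$: remove one well-chosen pair, apply the inductive hypothesis to the remaining $2(n-1)$ points to get a $1$-Lipschitz $\Psi\colon M\setminus\{x_n,y_n\}\to\ell_1^{n-1}$ with the required axis-differences, and then extend to $\Phi=(\Psi,F_n)\colon M\to\ell_1^{n-1}\oplus_1\mathbb R$ by setting $F_n\equiv 0$ on the old points and placing the two removed points at $\bigl(w,\pm\tfrac12\rho(x_n,y_n)\bigr)$ for a suitable common $w\in\ell_1^{n-1}$. A direct check shows the new points are handled nonexpansively precisely when $w$ lies in the intersection of the $\ell_1^{n-1}$-balls $\bigl\{w:\|w-\Psi(p)\|_1\le\min\bigl(\rho(x_n,p),\rho(y_n,p)\bigr)-\tfrac12\rho(x_n,y_n)\bigr\}$ taken over the old points $p$. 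Thus everything hinges on showing that, for a correctly chosen pair to peel, this family of balls has a common point, and it is here that the hypothesis of having exactly $2n$ points (just enough for a perfect matching) is consumed.

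The step I expect to be the main obstacle is precisely this selection of the matching together with the nonemptiness of the intersection of $\ell_1$-balls. Balls in $\ell_1^{m}$ with $m\ge2$ do not have the binary intersection property, so the extension is genuinely not automatic, and a careless matching really does fail: pairing the two closest points and being forced into the complementary pair can produce an infeasible system, so that neither the minimum-distance pair nor the diameter pair is the right one to peel. I therefore expect the correct argument to select the matching by a global or extremal criterion (for instance a matching optimizing a suitable functional of the pair-distances, or chosen so that each pair is ``metrically separated'' from the configuration of the others) and then to verify the ball intersection through the resulting inequalities between the $\rho(x_i,y_i)$ and the cross-distances; in the auxiliary coordinates one may be able to gain room by first taking $\ell_\infty$-type (McShane) extensions, where intersections are free, before correcting to the $\ell_1$ structure. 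Getting this combinatorial--metric selection right, rather than the bookkeeping around the isometry and the projection, is the crux.
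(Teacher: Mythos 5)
You should first note that the paper does not prove this statement at all: it is imported verbatim as \cite[Theorem~14.5]{KMO20}, so you are attempting to reprove an external theorem of Khan--Mim--Ostrovskii. Your reduction is correct and is the right framework: the biorthogonality $\langle f_i,u_j\rangle=\delta_{ij}$ plus the sign-condition $\bigl\|\sum_i\varepsilon_i f_i\bigr\|\le 1$ does give both the isometric copy (the duality estimate $\sum_i|a_i|\le\|\sum_j a_j u_j\|\le\sum_i|a_i|$ is fine) and the norm-one projection $P\mu=\sum_i\langle f_i,\mu\rangle u_i$, and the equivalence with a single nonexpansive map $\Phi=(f_1,\dots,f_n)\colon M\to\ell_1^n$ satisfying $\Phi(x_i)-\Phi(y_i)=\rho(x_i,y_i)e_i$ is exact, since $\mathrm{Lip}(\Phi)=\max_\varepsilon\|\sum_i\varepsilon_i f_i\|$. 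Your guess about the selection criterion is also on target: the known proof takes a minimum-weight perfect matching of the $2n$ points (compare the paper's remark after Corollary~\ref{corollary:n-dim-subs}, where for $n=2$ the enumeration is chosen to minimize $\rho(x_1,x_4)+\rho(x_2,x_3)$). But everything you correctly identify as ``the crux'' --- choosing the matching and proving that the nonexpansive $\Phi$ exists --- is left as an expectation rather than an argument, and that crux \emph{is} the theorem; without it the proposal proves nothing beyond the routine bookkeeping.

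Moreover, the inductive scheme you sketch has two concrete defects even granting a good choice of pair to peel. First, the symmetric placement of the peeled pair at $\bigl(w,\pm\tfrac12\rho(x_n,y_n)\bigr)$ is already infeasible in trivial configurations: your own radius formula $\min\bigl(\rho(x_n,p),\rho(y_n,p)\bigr)-\tfrac12\rho(x_n,y_n)$ is negative as soon as some old point $p$ lies closer to $x_n$ or $y_n$ than half their mutual distance (e.g.\ collinear points $0,\;0.01,\;1$), yet a suitable $\Phi$ plainly exists there; the offset along the new coordinate must be a free variable, only the \emph{difference} $\rho(x_n,y_n)$ being prescribed. Second, and more structurally, your inductive hypothesis is too weak to close the induction: it grants the existence of \emph{some} nonexpansive $\Psi$ on the remaining $2(n-1)$ points with the required axis-differences, but gives no control over the locations $\Psi(p)\in\ell_1^{n-1}$, while the feasibility of your ball system depends entirely on those locations. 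One would have to strengthen the statement being proved (prescribing additional metric properties of $\Psi$) or, as in \cite{KMO20}, argue globally from the minimality of the matching rather than by peeling one pair at a time.
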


Recall that it is not true in general that if $Y$ is a subspace of a Banach space $X$, then $Y^*$ is isometric to a subspace of $X^*$; however, the  scenario is different if $Y$ is $1$-complemented.

\begin{lemma}\label{lemma:duality-1}
Let $X$ be a Banach space that contains a $1$-complemented subspace $Y$. Then $Y^*$ embeds isometrically as a subspace of $X^*$.
\end{lemma}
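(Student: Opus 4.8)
The plan is to use the adjoint of the norm-one projection as the desired isometric embedding. Since $Y$ is $1$-complemented in $X$, there is a projection $P\in\mathcal{L}(X,X)$ with range $Y$ satisfying $P|_Y=\mathrm{id}_Y$ and $\|P\|=1$. I would define $\Phi\colon Y^*\to X^*$ by $\Phi(y^*)=y^*\circ P$ (that is, the adjoint operator $P^*$ regarded as a map into $X^*$), which is plainly linear, and then show that it is an isometry.

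For the upper estimate, for every $y^*\in Y^*$ and every $x\in B_X$ one has $|y^*(Px)|\le\|y^*\|\,\|Px\|\le\|y^*\|\,\|P\|\,\|x\|\le\|y^*\|$, where the last inequality uses $\|P\|=1$. Taking the supremum over $x\in B_X$ gives $\|\Phi(y^*)\|\le\|y^*\|$. For the reverse estimate, I would restrict the supremum defining the dual norm to the unit ball of $Y$: since $Py=y$ for every $y\in Y$, this yields
$$
\|\Phi(y^*)\|=\sup_{x\in B_X}|y^*(Px)|\ge\sup_{y\in B_Y}|y^*(Py)|=\sup_{y\in B_Y}|y^*(y)|=\|y^*\|.
$$
Combining the two estimates gives $\|\Phi(y^*)\|=\|y^*\|$, so $\Phi$ is the required isometric embedding of $Y^*$ into $X^*$, injectivity being automatic for any isometry.

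I do not anticipate a genuine obstacle here; the single point requiring care is that $1$-complementedness supplies a projection enjoying \emph{both} $\|P\|=1$, needed for the upper bound, and $P|_Y=\mathrm{id}_Y$, needed for the lower bound, and that each of these two features is used in precisely one of the two inequalities. (This also clarifies the remark preceding the statement: for a merely closed subspace $Y$ one has no such norm-one projection, and the argument breaks down.)
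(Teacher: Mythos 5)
Your proof is correct and is essentially the paper's own argument: both use the adjoint of the norm-one projection, $y^*\mapsto y^*\circ P$, with the upper bound from $\|P\|=1$ and the lower bound from $P$ fixing $Y$ pointwise (you merely spell out the two inequalities that the paper compresses into one chain of equalities). Nothing to add.
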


\begin{proof}
Let $P: X\rightarrow Y$ be a norm-one projection. Consider the mapping $U:Y^*\rightarrow X^*$ such that for all $y^*\in Y^*$, $U(y^*):=y^*\circ P$, that is, for all $x\in X$, $U(y^*)(x):=y^*(P(x))$. Then $U$ is an isometric embedding, as desired. Indeed, just note that for each $y^*\in Y^*$, we have
$$
\| U(y^*)\|=\sup_{x\in B_X} \|U(y^*(x))\|=\sup_{x\in B_X}\|y^*(P(x))\|=\sup_{y\in B_Y}\|y^*(y)\|=\|y^*\|. \qedhere
$$
\end{proof}

Finally, recall the following well-known result, for which it is sufficient to consider the span of $n$ vectors in $\ell_\infty^{2^{n-1}}$ with $\pm 1$ coordinates, built analogously to the Rademacher functions on $[0, 1]$.

\begin{lemma}\label{lemma:ell-1-n-subs-ell-inf-p}
If $n\in\mathbb{N}$, then $\ell_1^n$ is isometric to a subspace of $\ell_\infty^{2^{n-1}}$.
\end{lemma}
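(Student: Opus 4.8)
The plan is to construct $n$ explicit vectors $v_1, \dots, v_n \in \ell_\infty^{2^{n-1}}$ with all entries in $\{-1, +1\}$ and to show that their linear span, with the induced norm, is isometric to $\ell_1^n$. The upper estimate is automatic: for any scalars $a_1, \dots, a_n$ and any coordinate $j$, since each entry $v_k(j)$ has modulus $1$, we have $\left|\sum_{k=1}^n a_k v_k(j)\right| \le \sum_{k=1}^n |a_k|$, and taking the maximum over $j$ gives $\left\|\sum_{k=1}^n a_k v_k\right\|_\infty \le \sum_{k=1}^n |a_k|$. The entire content of the proof is therefore to arrange the $\pm 1$ entries so that this bound is attained for every choice of scalars.

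For the lower estimate, I would index the $2^{n-1}$ coordinates by the sign patterns $s = (\epsilon_1, \dots, \epsilon_n) \in \{-1,+1\}^n$ subject to the normalization $\epsilon_1 = +1$; there are exactly $2^{n-1}$ such patterns. I then define $v_k$ at the coordinate labeled by $s$ to be $\epsilon_k$. Given scalars $a_1, \dots, a_n$, set $\eta_k := \operatorname{sign}(a_k)$, with the convention $\operatorname{sign}(0) = +1$, so that $\sum_{k=1}^n a_k \eta_k = \sum_{k=1}^n |a_k|$. If $\eta_1 = +1$, then $(\eta_1, \dots, \eta_n)$ is itself one of the coordinate labels, and at that coordinate $\sum_{k=1}^n a_k v_k = \sum_{k=1}^n |a_k|$. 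If instead $\eta_1 = -1$, then the flipped pattern $(-\eta_1, \dots, -\eta_n)$ has first entry $+1$, hence labels a coordinate, and there $\sum_{k=1}^n a_k v_k = -\sum_{k=1}^n |a_k|$; taking the modulus yields the same value. In either case $\left\|\sum_{k=1}^n a_k v_k\right\|_\infty \ge \sum_{k=1}^n |a_k|$, so equality holds.

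Combining the two estimates gives $\left\|\sum_{k=1}^n a_k v_k\right\|_\infty = \sum_{k=1}^n |a_k|$ for all scalars, which simultaneously shows that $v_1, \dots, v_n$ are linearly independent and that the linear map $\ell_1^n \to \ell_\infty^{2^{n-1}}$ sending the $k$-th canonical unit vector to $v_k$ is an isometry onto its image. I do not expect any genuine obstacle here; the only point requiring care is the bookkeeping observation that restricting to sign patterns with $\epsilon_1 = +1$ leaves exactly $2^{n-1}$ coordinates while still realizing every possible sign pattern up to a global flip, which the absolute value inside the $\ell_\infty$ norm absorbs. This is precisely the Rademacher-type choice of vectors alluded to in the statement.
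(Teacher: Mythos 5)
Your proof is correct and follows exactly the route the paper indicates: the paper dispenses with the lemma by the one-line remark that one should take the span of $n$ vectors with $\pm 1$ coordinates in $\ell_\infty^{2^{n-1}}$ built analogously to the Rademacher functions, and your indexing of coordinates by sign patterns $(\epsilon_1,\dots,\epsilon_n)$ with $\epsilon_1=+1$ is precisely that construction up to relabeling. Your write-up correctly supplies the details the paper omits, including the global-flip observation that justifies using only $2^{n-1}$ coordinates.
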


We now have all the necessary tools for the proof of the main result of the section, which is the following theorem.

\begin{theorem}\label{theorem:finite-dimensional-subspace}
Let $n>1$ be a natural number, and let $M$ be a pointed metric space with at least $2^n$ distinct points. Then, there exists a linear subspace of $\sna(M)$ which is isometrically isomorphic to $\ell_1^n$.
\end{theorem}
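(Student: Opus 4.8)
The plan is to reduce immediately to a finite metric space and then chain together the four preparatory lemmas. By Lemma \ref{lemma:Att_1-v2} it suffices to produce a \emph{finite} pointed subspace $K\subseteq M$ such that $\sna(K)$ contains an isometric copy of $\ell_1^n$; the lemma will then transfer this copy to $\sna(M)$. Since $M$ has at least $2^n$ distinct points and contains the distinguished point $0$, I would fix a pointed subspace $K\subseteq M$ with exactly $2^n$ points. The crucial (and essentially free) observation is that, $K$ being finite, the supremum defining the Lipschitz norm is a maximum over finitely many pairs, whence $\sna(K)=\lip(K)=(\mathcal{F}(K))^*$. Thus it is enough to exhibit an isometric copy of $\ell_1^n$ inside the \emph{dual} space $(\mathcal{F}(K))^*$, and the norm-attainment requirement takes care of itself.

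The core of the argument is then a dimension count that explains the appearance of $2^n$ in the hypothesis. Writing $2^n=2\cdot 2^{n-1}$, I would apply Lemma \ref{lemma:ostrovskii} with parameter $2^{n-1}$: since $K$ has $2\cdot 2^{n-1}$ elements, $\mathcal{F}(K)$ contains a $1$-complemented subspace isometric to $\ell_1^{2^{n-1}}$. Passing to duals via Lemma \ref{lemma:duality-1} (this is exactly where $1$-complementedness is needed, since duals of subspaces embed isometrically only in the complemented case), I obtain that $(\ell_1^{2^{n-1}})^*=\ell_\infty^{2^{n-1}}$ embeds isometrically into $(\mathcal{F}(K))^*=\lip(K)$. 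Finally, Lemma \ref{lemma:ell-1-n-subs-ell-inf-p} supplies an isometric copy of $\ell_1^n$ inside $\ell_\infty^{2^{n-1}}$. Composing these isometric embeddings yields an isometric copy of $\ell_1^n$ inside $\lip(K)=\sna(K)$.

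To conclude, I would invoke Lemma \ref{lemma:Att_1-v2} with this finite subspace $K$: because $\sna(K)$ contains a linear subspace isometrically isomorphic to $\ell_1^n$ and $K\subseteq M$, the lemma guarantees that $\sna(M)$ contains such a subspace as well, which is precisely the claim.

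I do not expect a genuine obstacle here, since all the heavy lifting is done by the cited lemmas; the theorem is really a bookkeeping composition of them. The only point requiring care is the passage to the dual space, which forces the use of $1$-complementation (Lemma \ref{lemma:duality-1}) rather than a naive subspace-duality, together with the exact matching of dimensions: the factor $2^{n-1}$ coming from the Rademacher-type embedding of $\ell_1^n$ into $\ell_\infty^{2^{n-1}}$ must be doubled to $2^n$ to meet the ``$2m$ points'' requirement of Lemma \ref{lemma:ostrovskii}. Everything else, including the identification $\sna(K)=\lip(K)$ for finite $K$ and the standard identity $(\ell_1^m)^*=\ell_\infty^m$, is routine.
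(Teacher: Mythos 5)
Your proposal is correct and follows essentially the same route as the paper's proof: pick a $2^n$-point subspace $K$, use the $1$-complemented copy of $\ell_1^{2^{n-1}}$ in $\mathcal{F}(K)$ from Lemma \ref{lemma:ostrovskii}, dualize via Lemma \ref{lemma:duality-1} to get $\ell_\infty^{2^{n-1}}$ inside $\lip(K)=\sna(K)$, extract $\ell_1^n$ by Lemma \ref{lemma:ell-1-n-subs-ell-inf-p}, and transfer to $\sna(M)$ by Lemma \ref{lemma:Att_1-v2}. Your explicit remark that $\sna(K)=\lip(K)$ for finite $K$ (so norm-attainment is automatic at that stage) is exactly the observation the paper uses implicitly, and your dimension bookkeeping matches the paper's.
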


\begin{proof}
First of all, consider a metric subspace $K$ of $M$ containing exactly $2^n$ distinct points. By Lemma \ref{lemma:ostrovskii}, $\mathcal{F}(K)$ contains a $1$-complemented subspace isometric to $\ell_1^{2^{n-1}}$. Recall that $\mathcal{F}(K)^*$ is isometric to $\operatorname{Lip}_0(K)$, and that $(\ell_1^{2^{n-1}})^*$ is isometric to $\ell_\infty^{2^{n-1}}$, so by Lemma \ref{lemma:duality-1}, $\operatorname{Lip}_0(K)=\operatorname{SNA}(K)$ contains a subspace isometric to $\ell_\infty^{2^{n-1}}$. Applying Lemma \ref{lemma:ell-1-n-subs-ell-inf-p} now we may deduce that $\operatorname{SNA}(K)$ contains a subspace isometric to $\ell_1^n$ as well. Finally, by Lemma \ref{lemma:Att_1-v2}, $\sna(M)$ also contains a subspace isometric to $\ell_1^n$.
\end{proof}

\begin{corollary}\label{corollary:inf-M-finite-subs}
If $M$ is an infinite pointed metric space and $n\in\mathbb{N}$, then $\sna(M)$ contains an $n$-dimensional subspace isometric to $\ell_1^n$.
\end{corollary}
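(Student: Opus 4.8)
The plan is to reduce everything to Theorem \ref{theorem:finite-dimensional-subspace}, dispatching the trivial base case $n=1$ by hand. The key elementary observation is that an infinite metric space contains at least $2^n$ distinct points for every $n\in\mathbb{N}$, so the cardinality hypothesis of the theorem is automatically met as soon as $n>1$.

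For $n>1$ I would simply invoke Theorem \ref{theorem:finite-dimensional-subspace}: since $M$ is infinite it certainly has at least $2^n$ points, and the theorem then directly produces a linear subspace of $\sna(M)$ isometric to $\ell_1^n$. No further work is required in this range.

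For $n=1$ the theorem does not apply (its statement assumes $n>1$), but the claim is immediate since $\ell_1^1=\mathbb{R}$, so it suffices to exhibit a single nonzero strongly norm-attaining functional and take its span. Fixing any $p\in M$ with $p\neq 0$, which exists because $M$ is pointed, I would consider the distance functional $f(x)=\rho(0,x)$. Then $f\in S_{\lip(M)}$: it satisfies $f(0)=0$, it is $1$-Lipschitz by the triangle inequality, and it attains its norm strongly at the pair $(p,0)$, since $\frac{|f(p)-f(0)|}{\rho(p,0)}=1$. As every scalar multiple $af$ attains its norm at the same pair $(p,0)$, the line $\operatorname{span}\{f\}$ is a one-dimensional subspace of $\sna(M)$ that is (trivially) isometric to $\ell_1^1$.

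I do not expect any genuine obstacle here: the corollary is an immediate consequence of Theorem \ref{theorem:finite-dimensional-subspace} combined with the fact that infinite spaces have at least $2^n$ points, and the only point needing a separate (one-line) argument is the case $n=1$ excluded by the theorem's hypothesis.
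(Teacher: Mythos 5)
Your proof is correct and takes essentially the same route as the paper, which states the corollary as an immediate consequence of Theorem \ref{theorem:finite-dimensional-subspace} once one notes that an infinite $M$ has at least $2^n$ points. Your separate one-line treatment of the case $n=1$ (via the distance functional $f=\rho(0,\cdot)$, which lies in $S_{\lip(M)}$ and attains its norm strongly at $(p,0)$) is a valid and careful handling of an edge case the paper leaves implicit.
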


\begin{corollary}\label{corollary:n-dim-subs}
Let $n\in\mathbb{N}$. For a pointed metric space $M$, the following statemets are equivalent:
\begin{enumerate}
\item $\sna(M)$ contains $n$-dimensional linear subspaces.
\item $M$ contains at least $n+1$ points.
\end{enumerate}
\end{corollary}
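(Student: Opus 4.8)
The plan is to prove Corollary \ref{corollary:n-dim-subs} by establishing each implication separately, using Theorem \ref{theorem:finite-dimensional-subspace} as the main engine for the direction that requires it. The statement is a clean equivalence, so the work splits naturally into showing $(2)\Rightarrow(1)$ and $(1)\Rightarrow(2)$, and the bulk of the difficulty is already absorbed into the theorem just proved.

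For the implication $(2)\Rightarrow(1)$, I would argue as follows. Suppose $M$ contains at least $n+1$ points. I need to produce an $n$-dimensional linear subspace of $\sna(M)$. The delicate point is that $n+1$ is a much weaker hypothesis than the $2^n$ points required by Theorem \ref{theorem:finite-dimensional-subspace}, so I cannot invoke that theorem directly. Instead I would first handle the case where $M$ has exactly $n+1$ points: as noted in the paragraph preceding Lemma \ref{lemma:Att_1-v2}, a finite pointed metric space with exactly $m>1$ points satisfies $\lip(M)=\sna(M)$ and has dimension $m-1$; with $m=n+1$ this gives $\sna(M)=\lip(M)$ of dimension exactly $n$, which is itself the desired $n$-dimensional subspace. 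If $M$ has more than $n+1$ points, I would pick a metric subspace $K\subset M$ with exactly $n+1$ points (containing the distinguished point $0$), obtaining an $n$-dimensional subspace of $\sna(K)$ by the finite case. To transport this back to $M$ I would appeal to Lemma \ref{lemma:Att_1-v2}; the subtlety is that this lemma is stated only for $\ell_1^n$-subspaces, not for arbitrary $n$-dimensional subspaces, so I would instead observe that the finite space $K$ can be chosen so that $\sna(K)$ contains an $\ell_1^n$ copy, or alternatively apply Theorem \ref{theorem:finite-dimensional-subspace} directly whenever $M$ is infinite (covered by Corollary \ref{corollary:inf-M-finite-subs}) and treat only the remaining finite cases by the dimension count above. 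The cleanest route is: if $M$ is infinite use Corollary \ref{corollary:inf-M-finite-subs}; if $M$ is finite with at least $n+1$ points, then $\lip(M)=\sna(M)$ has dimension $\geq n$, and any $n$-dimensional linear subspace of it works.

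For the converse $(1)\Rightarrow(2)$, I would prove the contrapositive. Suppose $M$ has at most $n$ points. If $M$ has $m\leq n$ points, then $\lip(M)$ has dimension $m-1\leq n-1<n$, so $\lip(M)$ itself—and hence any subset of it, in particular $\sna(M)$—cannot contain an $n$-dimensional linear subspace, since a linear subspace of $\lip(M)$ of dimension $n$ would force $\dim\lip(M)\geq n$. This is a pure dimension-counting argument and presents no obstacle.

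I expect the main obstacle to be purely organizational: matching the hypothesis ``$n+1$ points'' in the corollary against the ``$2^n$ points'' threshold of the theorem. The resolution is to recognize that for \emph{finite} spaces the conclusion follows trivially from the dimension formula $\dim\lip(M)=|M|-1$, so the exponential gap in Theorem \ref{theorem:finite-dimensional-subspace} is only relevant for producing the specific isometric $\ell_1^n$ structure, which the corollary does not demand. Thus the corollary is genuinely a soft consequence: one direction is a dimension count, and the other combines the finite dimension formula with Corollary \ref{corollary:inf-M-finite-subs} for the infinite case.
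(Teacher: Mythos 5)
Your proof is correct and follows exactly the route the paper intends: the corollary is stated without proof precisely because it reduces to the fact, recalled before Lemma \ref{lemma:Att_1-v2}, that a finite pointed metric space with $m>1$ points has $\lip(M)=\sna(M)$ of dimension $m-1$ (which gives both the finite case of $(2)\Rightarrow(1)$ and, by dimension counting, $(1)\Rightarrow(2)$), combined with Corollary \ref{corollary:inf-M-finite-subs} for infinite $M$. Your observation that the $2^n$-point threshold of Theorem \ref{theorem:finite-dimensional-subspace} is only needed for the isometric $\ell_1^n$ structure, not for the bare dimension statement, is exactly the right resolution.
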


\begin{remark}
In the first preprint version of the document, we provided a fully constructive proof of Theorem \ref{theorem:finite-dimensional-subspace} for $n=2$. In order to do this, for any given metric space $M$ with exactly $4$ points, we provided two Lipschitz functionals $f_1$ and $f_2$ such that $\operatorname{span}(f_1, f_2)$ is a subspace of $\sna(M)$ isometric to $\ell_1^2$, and then we applied Lemma \ref{lemma:Att_1-v2} for bigger metric spaces. At that preprint we left the case $n>2$ as an open question despite conjecturing it to be true after our attempts to solve it. From that result, using the relation between $\sna(M)$ and $\operatorname{NA}(\mathcal{F}(M), \mathbb{R})$ and the fact that if $X^*$ contains an isometric copy of $\ell_\infty^2$ consisting of norm-attaining functionals, then $X$ contains an isometric copy of $\ell_1^2$, we deduced as a corollary that if $M$ contains at least $4$ points, then $\mathcal{F}(M)$ contains an isometric copy of $\ell_1^2$, a result which we believed to be new. However, Mikhail Ostrovskii kindly pointed us out the existence of the works \cite{KMO20,OO20}, where a more general result than our corollary was demonstrated, namely \cite[Theorem 14.5]{KMO20}, and this has allowed us to significantly improve our original results by providing a definitive answer to our question. We are deeply indebted to Mikhail Ostrovskii for this.

It seems to us that the original direct construction of $f_1$ and $f_2$ mentioned above may be of independent interest, so we permit ourselves to give it below without proof.

Let $M$ be a pointed metric space consisting in exactly $4$ points. We enumerate the elements of $M$ as $x_1, x_2, x_3, x_4$ in such a way that $x_1=0$ and 
\begin{equation*}\label{eq:condition-4-points}
\rho(x_1,x_4)+\rho(x_2,x_3)=\min\left\{\rho(\alpha,\beta)+\rho(\gamma,\delta):\, M=\{\alpha, \beta, \gamma, \delta\}\right\}.
\end{equation*}
Then the requested  $f_1, f_2 \in \lip(M)$ whose span is isometric to $\ell_1^2$ can be chosen as follows:
\begin{align*}
f_1&:\, \begin{cases}
f_1(x_1)=0,\\
f_1(x_2)=\rho(x_1,x_4)-\rho(x_2,x_4),\\
f_1(x_3)=\rho(x_1,x_4)-\rho(x_2,x_4)+\rho(x_2,x_3),\\
f_1(x_4)=\rho(x_1,x_4),
\end{cases}\\
f_2&:\, \begin{cases}
f_2(x_1)=0,\\
f_2(x_2)=\rho(x_1,x_2),\\
f_2(x_3)=\rho(x_1,x_2)-\rho(x_2,x_3),\\
f_2(x_4)=\rho(x_1,x_4).
\end{cases}
\end{align*}
\end{remark}

\section{Size of subspaces}\label{section:size-subs}

We start this section by showing that there exist metric spaces $M$ for which $\sna(M)$ contains ``big'' Banach subspaces. Actually, any Banach space $Y$ can be subspace of $\sna(M)$ for a suitable metric space $M$. 

\begin{proposition}\label{proposition:size-subspaces}
If $Y$ is a Banach space, then it is a subspace of $\sna(B_{Y^*})$.
\end{proposition}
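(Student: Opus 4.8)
The plan is to realize $Y$ inside $\lip(B_{Y^*})$ through the canonical evaluation map and to verify that the whole image lands in $\sna(B_{Y^*})$. I would take $M := B_{Y^*}$ as the pointed metric space, equipped with the metric inherited from $Y^*$ and with the origin of $Y^*$ as distinguished point; if $Y=\{0\}$ there is nothing to prove, so I assume $Y\neq\{0\}$, whence $M$ has at least two points. The embedding to use is $U:Y\to\lip(M)$ defined by $U(y)(y^*):=y^*(y)$ for $y^*\in B_{Y^*}$. This $U$ is plainly linear and satisfies $U(y)(0)=0$.

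The first routine step is to confirm that $U$ maps into $\lip(M)$ with $\|U(y)\|\le\|y\|$: for any $y^*,z^*\in B_{Y^*}$,
\[
|U(y)(y^*)-U(y)(z^*)|=|(y^*-z^*)(y)|\le \|y^*-z^*\|\,\|y\|,
\]
which is exactly the desired Lipschitz bound.

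The heart of the argument is to obtain the reverse inequality \emph{and} strong norm attainment simultaneously. For a fixed $y\neq 0$, I would use the Hahn--Banach theorem to choose a norming functional $y^*\in S_{Y^*}$ with $y^*(y)=\|y\|$, and then evaluate the difference quotient of $U(y)$ at the \emph{antipodal} pair $y^*,-y^*\in B_{Y^*}$, which are distinct since $y^*\neq 0$:
\[
\frac{|U(y)(y^*)-U(y)(-y^*)|}{\|y^*-(-y^*)\|}=\frac{|y^*(y)+y^*(y)|}{2}=\frac{2\|y\|}{2}=\|y\|.
\]
Together with the upper bound this yields $\|U(y)\|=\|y\|$, so $U$ is a linear isometric embedding, and moreover the supremum defining $\|U(y)\|$ is attained strongly at the pair $(y^*,-y^*)$, i.e. $U(y)\in\sna(M)$. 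The case $y=0$ is trivial, since the zero functional attains its (zero) norm at every pair. Hence $U(Y)\subset\sna(M)$, which is precisely the assertion that $Y$ embeds in $\sna(B_{Y^*})$.

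I do not expect a genuine obstacle here; the only point requiring care --- and the key idea --- is the choice of the symmetric pair $y^*,-y^*$. A single norming functional controls only one evaluation of $U(y)$, but pairing it with its negation lets that one Hahn--Banach functional witness the full Lipschitz norm, and hence strong attainment, without having to produce two separate extremal points of $B_{Y^*}$.
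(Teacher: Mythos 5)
Your proof is correct and follows essentially the same route as the paper: the evaluation embedding $y \mapsto \delta_y$, the triangle-inequality upper bound, and a Hahn--Banach norming functional witnessing strong attainment. The only (inessential) difference is the witnessing pair --- the paper uses $(0, y^*)$, which works directly since $U(y)(0)=0$, so your closing remark that the antipodal pairing $(y^*,-y^*)$ is the key idea slightly overstates its necessity.
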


\begin{proof}
Let $Y$ be any Banach space. Consider the metric space $B_{Y^*}$. For each $y\in Y$, let $\delta_y:B_{Y^*}\rightarrow \mathbb{R}$ be the evaluation map $\delta_y(y^*):=y^*(y)$, for all $y^*\in B_{Y^*}$. For each $y\in Y$, there exists some $y^*\in B_{Y^*}$ such that $y^*(y)=\| y \|$. It is immediate to check that $\delta_y$ is in $\lip(B_{Y^*})$ with Lipschitz constant $\| y\|$, and that it attains its norm strongly at the pair $(0, y^*)$. Therefore, $Y$ is a subspace of $\sna(B_{Y^*})$.
\end{proof}

A natural question arises now: given a Banach space $Y$, how small can a metric space $M$ be so that $Y$ is a linear subspace of $\sna(M)$? From the previous proposition, it is clear that if $Y$ has separable dual, then $M$ can be chosen to be separable. 

What if $Y^*$ is not separable? For instance, we have seen in Theorem \ref{theorem:finite-dimensional-subspace} that if $M$ is an infinite pointed metric space, then $\sna(M)$ contains isometrically all the $\ell_1^n$ spaces as linear subspaces, so it is natural to wonder if it also contains, say, $\ell_1$. However, this is not the case in general, as we are about to see. Theorem \ref{theorem:inverse-problem} below shows that separability of $Y^*$ actually characterizes the possibility of $M$ being separable. In order to prove it, we rely on the concept of James boundary introduced in Section \ref{section:introduction} (see \cite[Definition 3.118]{FHHMZ11}) and also on the following result by Gilles Godefroy.

\begin{proposition}[{Godefroy, \cite[Corollary 3.125]{FHHMZ11}}]\label{cor:sep-james-boundary}
Let $X$ be a Banach space. If $X$ has a separable James boundary, then $X^*$ is separable.
\end{proposition}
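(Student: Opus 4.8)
The plan is to show that the norm-closed linear span $Z := \overline{\operatorname{span}}(B) \subseteq X^*$, which is separable precisely because $B$ is, actually coincides with all of $X^*$. Suppose towards a contradiction that $Z \neq X^*$. Then there is some $f_0 \in X^* \setminus Z$, and by the Hahn--Banach theorem there is $\phi \in S_{X^{**}}$ with $\phi|_Z = 0$ and $\phi(f_0) \neq 0$; in particular $\phi(b) = 0$ for every $b \in B$. The strategy is to realise $\phi$ as a \emph{sequential} weak$^*$-limit of points of $B_X$ when tested against functionals of $W$, and then to invoke the boundary property to force a contradiction.

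To produce a sequence rather than merely a Goldstine net, I would pass to the separable subspace $W := \overline{\operatorname{span}}(B \cup \{f_0\}) \subseteq X^*$ and consider the restriction operator $R := \iota^* : X^{**} \to W^*$, $R(\xi) = \xi|_W$, where $\iota : W \hookrightarrow X^*$ is the inclusion. Being an adjoint, $R$ is weak$^*$-to-weak$^*$ continuous, so $R(B_{X^{**}})$ is weak$^*$-compact in $W^*$ by Banach--Alaoglu; moreover Goldstine's theorem gives $\overline{R(B_X)}^{\,w^*} = R(B_{X^{**}})$, so $\phi|_W = R(\phi)$ lies in the weak$^*$-closure of $R(B_X) \subseteq B_{W^*}$. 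Since $W$ is separable, $(B_{W^*}, w^*)$ is metrizable, and hence this weak$^*$-closure is sequential: there is a sequence $(x_n) \subseteq B_X$ with $f(x_n) \to \phi(f)$ for every $f \in W$. In particular $b(x_n) \to \phi(b) = 0$ for all $b \in B$, while $f_0(x_n) \to \phi(f_0) \neq 0$.

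Finally I would apply the Rainwater--Simons theorem: since $B$ is a boundary of $X$ and $(x_n)$ is a bounded sequence with $b(x_n) \to 0$ for every $b \in B$, it follows that $x_n \to 0$ weakly in $X$, whence $f_0(x_n) \to 0$ --- contradicting $f_0(x_n) \to \phi(f_0) \neq 0$. This contradiction yields $Z = X^*$, and hence $X^*$ is separable. The genuinely hard ingredient is this last step, and it is exactly here that the \emph{attainment} built into the definition of a boundary is indispensable: a merely norming separable set need not force $X^*$ to be separable, as the canonical norming copy of $\ell_1$ inside $\ell_\infty^*$ shows. I would obtain the Rainwater--Simons input from Simons' inequality, which is the technical heart of the whole argument; by contrast the remaining steps are soft consequences of Goldstine's and Banach--Alaoglu's theorems together with the metrizability afforded by separability.
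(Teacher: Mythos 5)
The paper does not prove this proposition at all: it is imported verbatim as Godefroy's result, with a citation to \cite[Corollary 3.125]{FHHMZ11}, so there is no internal proof to compare against. Your argument is correct and is essentially the standard proof behind the cited result: each step checks out --- the Hahn--Banach separation of $f_0$ from $Z=\overline{\operatorname{span}}(B)$, the weak$^*$-to-weak$^*$ continuity of the restriction $R=\iota^*$ giving $\overline{R(B_X)}^{\,w^*}=R(B_{X^{**}})$ via Goldstine and compactness, the metrizability of $(B_{W^*},w^*)$ for separable $W$ to extract the sequence $(x_n)$, and finally the Rainwater--Simons theorem (i.e.\ Simons' inequality, which is indeed the only non-soft ingredient) forcing $x_n\to 0$ weakly and contradicting $f_0(x_n)\to\phi(f_0)\neq 0$. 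You are also right to flag that attainment is essential; your $\ell_1\subset\ell_\infty^*$ example correctly shows a separable norming set does not suffice.
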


\begin{theorem}\label{theorem:inverse-problem}
For a Banach space $Y$, the following assertions are equivalent.
\begin{enumerate}
\item[(1)] There is a separable pointed metric space $M$ and a closed linear subspace $Z \subset \lip(M)$ such that $Z$ is isometric to $Y$ and $Z \subset \sna(M)$.
\item[(2)] There is a separable Banach space $X$ and a closed linear subspace $Z_1 \subset X^*$ such that $Z_1$ is isometric to $Y$ and $Z_1 \subset \na(X,\mathbb{R})$.
\item[(3)] $Y^*$ is separable.
\end{enumerate}
\end{theorem}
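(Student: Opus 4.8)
The plan is to prove the cyclic chain of implications $(3)\Rightarrow(1)\Rightarrow(2)\Rightarrow(3)$, where the last implication carries all the weight and the first two are essentially bookkeeping with the duality $\lip(M)=\mathcal{F}(M)^*$.

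For $(3)\Rightarrow(1)$ I would simply invoke Proposition \ref{proposition:size-subspaces}. If $Y^*$ is separable, then its unit ball $B_{Y^*}$, being a subset of a separable metric space, is itself separable. Taking $M=B_{Y^*}$ and $Z=\{\delta_y:y\in Y\}$ exactly as in that proposition produces a separable pointed metric space together with a closed subspace $Z\subset\lip(M)$ that is isometric to $Y$ and contained in $\sna(M)$. This is precisely the observation recorded in the text immediately after Proposition \ref{proposition:size-subspaces}. For $(1)\Rightarrow(2)$ I would set $X=\mathcal{F}(M)$. Since $M$ is separable, $\mathcal{F}(M)$ is separable (rational linear combinations of the $\delta_x$ over a countable dense subset of $M$ are dense in $\mathcal{F}(M)$). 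Viewing $Z$ inside $X^*=\lip(M)$ and putting $Z_1=Z$, it remains to note that strong norm attainment means attainment at a molecule $\frac{\delta_x-\delta_y}{\rho(x,y)}$, which lies in $B_{\mathcal{F}(M)}$; hence every element of $Z\subset\sna(M)$ attains its norm on $B_X$, so $Z_1\subset\na(X,\mathbb{R})$.

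The substantial step is $(2)\Rightarrow(3)$, and there the strategy is to manufacture a \emph{separable James boundary} of $Z_1$ and then quote Godefroy's Proposition \ref{cor:sep-james-boundary}. Consider the restriction-of-evaluation map $J\colon X\to Z_1^*$ given by $J(x)(f)=f(x)$ for $f\in Z_1$. It is linear with $\|J\|\le 1$, since $\|J(x)\|=\sup_{f\in B_{Z_1}}|f(x)|\le\|x\|$. Because $X$ is separable, $B_X$ is separable in norm, and as $J$ is norm-continuous, the image $J(B_X)\subset B_{Z_1^*}$ is a separable subset of $Z_1^*$. Now for each nonzero $z\in Z_1\subset\na(X,\mathbb{R})$ I would choose $x_z\in B_X$ with $z(x_z)=\|z\|$; then $J(x_z)\in J(B_X)$ satisfies $J(x_z)(z)=\|z\|$. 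Thus $J(B_X)$ is a James boundary of $Z_1$ contained in a separable set, so $Z_1$ admits a separable James boundary. Proposition \ref{cor:sep-james-boundary} then yields that $Z_1^*$ is separable, and since $Z_1$ is isometric to $Y$, so is $Y^*$, giving $(3)$.

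I expect the main obstacle to be this last implication, specifically recognizing that the norm-attainment hypothesis $Z_1\subset\na(X,\mathbb{R})$ is exactly what lets the separable, norm-continuous image $J(B_X)$ play the role of a boundary: each $z\in Z_1$ is \emph{norm-attained} at some $x_z\in B_X$, so a single separable family of evaluation functionals suffices to norm all of $Z_1$. The other two implications are direct once the identification $\lip(M)=\mathcal{F}(M)^*$, the separability of $\mathcal{F}(M)$ for separable $M$, and the inclusion $\sna(M)\subset\na(\mathcal{F}(M),\mathbb{R})$ are in place.
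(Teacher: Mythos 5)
Your proposal is correct and takes essentially the same route as the paper: $(3)\Rightarrow(1)$ via Proposition \ref{proposition:size-subspaces} with $M=B_{Y^*}$, $(1)\Rightarrow(2)$ via $X=\mathcal{F}(M)$ and the identification $\lip(M)=\mathcal{F}(M)^*$, and $(2)\Rightarrow(3)$ by observing that the evaluation image of $B_X$ (your map $J$ is exactly the paper's composition $R\circ J$ of the canonical embedding with the restriction operator) is a separable James boundary of $Z_1$, then invoking Godefroy's Proposition \ref{cor:sep-james-boundary}. The only differences are cosmetic: you make explicit the separability of $\mathcal{F}(M)$ and the fact that molecules lie in $B_{\mathcal{F}(M)}$, details the paper leaves implicit.
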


\begin{proof}
(1) implies (2): it is sufficient to consider $X = \mathcal F(M)$ and use the identification of $\lip(M)$ with $X^*$. With this identification $Z \subset \lip(M)$ identifies with a subspace of $Z_1 \subset X^*$ and all elements of $Z_1$ remain to be norm-attaining as elements of $X^*$.

(2) implies (3): Assume that such a separable Banach space $X$ exists, denote $J:X\rightarrow X^{**}$ the canonical embedding of $X$ into its bidual and $R: X^{**} \rightarrow Z_1^*$ the natural restriction operator. The condition $Z_1 \subset \na(X,\mathbb{R})$ means that for every $f\in Z_1$ there is $x\in B_X$ is such that $f(x)=\|f\|$, so in other words  $((R\circ J)(x))(f) = \|f\|$. Consequently, $(R\circ J)(B_X)$ is a separable James boundary of $Z_1$, so $Z_1^*$ must be separable by Godefroy's result (see Proposition \ref{cor:sep-james-boundary}), that is, $Y^*$ must be separable.

(3) implies (1): If $Y^*$ is separable, take $M=B_{Y^*}$ and apply Proposition \ref{proposition:size-subspaces}. 
\end{proof}

Therefore, there exist infinite metric spaces $M$ such that $\sna(M)$ does not contain linear subspaces isometrically isomorphic to $\ell_1$, for instance.

\begin{remark}
Note that a direct proof that (2) implies (1) in Theorem \ref{theorem:inverse-problem} can be achieved by considering $M = B_X$. In this case, the operator $U$ that maps each $f \in X^*$ to its restriction on $M$ is an isometric embedding with the property that if $f$ was norm-attaining then $U(f)$ is strongly norm-attaining on $M$. So the subspace $Z := U(Z_1)$ is what we are looking for.
\end{remark}

The next Theorem \ref{proposition:sigma-compact} shows in a similar way that if $M$ is ``small'' then the restrictions on Banach subspaces in $\sna(M)$ happen to be much stronger. In the proof we will use \cite[Corollary 2.2]{Fonf2015}.

\begin{proposition}[{\cite[Corollary 2.2]{Fonf2015}}] \label{cor:fonf}
If \(X\) has a boundary that can be covered by a set of the form \(\bigcup_{j=1}^\infty \cconv^{w^*}(K_j)\), where each \(K_j\) is countably infinite and \(w^\ast\)-compact, then \(X\) is isomorphically polyhedral.
\end{proposition}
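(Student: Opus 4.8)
The statement is the specialization of a general piecewise criterion, and the plan is to extract from the hypothesis the structural smallness that makes a known sufficient condition for isomorphic polyhedrality applicable. The crucial first observation is that a countably infinite $w^*$-compact set is automatically \emph{scattered}: a nonempty perfect compact Hausdorff space is uncountable, so each $K_j$ contains no nonempty perfect subset, and hence its Cantor--Bendixson derivatives strictly decrease and vanish after countably many (ordinal) steps. Moreover, by Milman's theorem the extreme points of the $w^*$-compact convex set $\cconv^{w^*}(K_j)$ are contained in $K_j$, so each piece of the boundary has a countable scattered set of extreme points. Thus the given boundary $B$ is trapped between the countable scattered sets $K_j$ and their $w^*$-closed convex hulls, and the entire task is to convert this topological smallness into an equivalent polyhedral renorming. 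To that end I would first reduce to the sufficient condition for isomorphic polyhedrality phrased in terms of boundaries: it is enough to produce an equivalent norm on $X$ together with a boundary $B'$ for it all of whose $w^*$-accumulation points have norm strictly less than $1$ (a ``polyhedral boundary''), after which Fonf's criterion closes the argument.

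\textbf{The main construction.} To build such a norm I would exploit the scattered stratification of each $K_j$. The Cantor--Bendixson rank organizes the points of $K_j$ by their order of accumulation, the $w^*$-limit points of $K_j$ being precisely the points of the derived set $K_j'$. Using this stratification I would manufacture, for each $j$, a $w^*$-lower-semicontinuous seminorm $p_j \le \|\cdot\|$ whose effect on the dual side is to shrink the derived (higher-rank) points relative to the isolated points of $K_j$. The countably many pieces are then amalgamated into a single equivalent norm on $X$ by a summable combination such as
$$|||x|||^2 \;=\; \|x\|^2 \;+\; \sum_{j=1}^\infty 2^{-j}\,p_j(x)^2,$$
where the bound $p_j \le \|\cdot\|$ together with the weights $2^{-j}$ guarantees $\|x\|^2 \le |||x|||^2 \le 2\|x\|^2$, so that $|||\cdot|||$ is equivalent to $\|\cdot\|$. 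The intended effect is that, within each amalgamated piece, the points of higher Cantor--Bendixson rank are pushed strictly inside the new unit sphere, while the outermost isolated points, which still form a boundary, stay on the sphere; the resulting boundary $B'$ should then have all of its $w^*$-limit points of norm $<1$.

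\textbf{Conclusion and the main obstacle.} Once $B'$ is a boundary for $(X,|||\cdot|||)$ with every $w^*$-accumulation point of norm strictly below $1$, Fonf's polyhedrality criterion yields that $(X,|||\cdot|||)$ is polyhedral, whence $X$ is isomorphically polyhedral. The hard part is entirely in the amalgamation: scatteredness guarantees only the \emph{qualitative}, piece-by-piece fact that every $w^*$-limit point is a lower-rank derived point, and this must be upgraded to a \emph{uniform quantitative gap} — a genuine drop of the norm below $1$ — that survives passage to the closed convex hulls $\cconv^{w^*}(K_j)$ and is simultaneously compatible across all countably many pieces while preserving equivalence of norms. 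The delicate point is that taking convex hulls can create new $w^*$-limit points on faces of $\cconv^{w^*}(K_j)$, so the transfinite Cantor--Bendixson bookkeeping that controls the shrinking and the summability of the weights $2^{-j}$ that controls the equivalence must be balanced carefully against one another; this is precisely where the technical weight of the argument lies.
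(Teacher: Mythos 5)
Your attempt cannot be compared with an internal argument, because the paper offers none: this proposition is imported verbatim from Fonf--Smith--Troyanski \cite[Corollary 2.2]{Fonf2015} and is used as a black box. Judged on its own terms, your proposal is a programme rather than a proof, and you concede as much (``this is precisely where the technical weight of the argument lies''). Concretely: the seminorms $p_j$ are never defined; the assertion that the amalgamated norm $|||\cdot|||$ admits a boundary $B'$ all of whose $w^*$-accumulation points have norm strictly below $1$ is never argued; and the two obstacles you name at the end --- extracting a uniform quantitative gap from the merely qualitative Cantor--Bendixson stratification, and controlling the new $w^*$-limit and extreme points created on the faces of $\cconv^{w^*}(K_j)$ --- are not residual technicalities but the entire mathematical content of the theorem. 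The parts you do establish (a countably infinite $w^*$-compact set is scattered; Milman's theorem gives $\operatorname{ext}\bigl(\cconv^{w^*}(K_j)\bigr)\subset K_j$; the reduction to Fonf's criterion of a boundary with $w^*$-accumulation points inside the open ball) are correct but peripheral. There is also a structural reason to distrust the proposed Ansatz: an $\ell_2$-type combination $|||x|||^2=\|x\|^2+\sum_j 2^{-j}p_j(x)^2$ tends to make the unit ball \emph{rounder}, which works against polyhedrality; the renormings in this circle of ideas are built as suprema of countably many functionals with a strict-drop condition, and the construction in \cite{Fonf2015} is of that kind, not a quadratic amalgamation.

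It is also worth pointing out the route your sketch misses, which shows that the operative hypothesis here is countability rather than scatteredness. Suppose (as one may arrange in the paper's application of this proposition) that $K_j\subset B_{X^*}$ for each $j$. Given $x\in X$, take a boundary point $f$ with $f(x)=\|x\|$; then $f\in\cconv^{w^*}(K_j)$ for some $j$, the $w^*$-continuous evaluation $\hat{x}$ attains its maximum over the $w^*$-compact convex set $\cconv^{w^*}(K_j)$ at an extreme point (Bauer's maximum principle), and by Milman that extreme point lies in $K_j$; hence the \emph{countable} set $\bigcup_j K_j$ is itself a boundary of $X$. One then invokes Fonf's classical theorem that a separable Banach space with a countable boundary is isomorphically polyhedral --- essentially the ``internal'' characterization from \cite{Fonf90} that the authors themselves mention after Corollary 3.8 --- and separability is available in the intended application, where it is established beforehand via Godefroy's result. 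In other words, at the level of this corollary no new transfinite renorming scheme needs to be invented: the hard renorming work is encapsulated in the cited theorems, and your plan, which attempts to redo that work from scratch while leaving its decisive step open, has a genuine gap.
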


\begin{corollary}\label{cor:separable}
If $X$ has a boundary that can be covered by a countable number of compact sets, then $X$ is separable and isomorphically polyhedral.
\end{corollary}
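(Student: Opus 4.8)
The plan is to read the two conclusions off the results already quoted: separability from Godefroy's theorem (Proposition~\ref{cor:sep-james-boundary}) and isomorphic polyhedrality from Fonf's theorem (Proposition~\ref{cor:fonf}), after rewriting the given covering in the precise form that Proposition~\ref{cor:fonf} demands. Throughout, write the boundary as $B=\bigcup_{j=1}^\infty C_j$, where each $C_j\subset B_{X^*}$ is norm-compact.

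First I would settle separability, which is immediate. Each $C_j$, being norm-compact, is norm-separable, so the countable union $B$ is a norm-separable James boundary of $X$. By Proposition~\ref{cor:sep-james-boundary}, $X^*$ is separable, and since a Banach space with separable dual is itself separable, $X$ is separable.

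The remaining and more delicate task is to produce sets $K_j$ that are simultaneously countably infinite and $w^*$-compact and satisfy $\cconv^{w^*}(K_j)\supseteq C_j$. I expect this to be the main obstacle: the naive choice of a countable norm-dense subset $D_j\subset C_j$ has $\cconv^{w^*}(D_j)\supseteq C_j$ but fails to be $w^*$-compact, while $C_j$ itself is $w^*$-compact but need not be countable. I would resolve this tension with Grothendieck's description of norm-compact sets: for each $j$ there is a norm-null sequence $(x^{(j)}_n)_n$ in $X^*$ with $C_j\subseteq \cconv(\{x^{(j)}_n:n\in\N\})$. Setting $K_j:=\{x^{(j)}_n:n\in\N\}\cup\{0\}$, the set $K_j$ is norm-compact, hence $w^*$-compact, and $\cconv^{w^*}(K_j)\supseteq \cconv(\{x^{(j)}_n:n\in\N\})\supseteq C_j$, because the $w^*$-closed convex hull always contains the norm-closed convex hull.

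Consequently $B\subseteq\bigcup_{j=1}^\infty\cconv^{w^*}(K_j)$, so $X$ has a boundary of exactly the shape required by Proposition~\ref{cor:fonf}, which yields that $X$ is isomorphically polyhedral. The only bookkeeping point beyond Grothendieck's lemma is to guarantee that each $K_j$ is genuinely \emph{countably infinite}, as Proposition~\ref{cor:fonf} asks; this is harmless, since if some $K_j$ turned out to be finite one may adjoin a fixed norm-null sequence (together with its limit $0$), which keeps $K_j$ norm-compact and can only enlarge $\cconv^{w^*}(K_j)$, so the inclusion $\cconv^{w^*}(K_j)\supseteq C_j$ is preserved.
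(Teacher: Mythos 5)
Your proof is correct and follows essentially the same route as the paper's: separability comes from Godefroy's theorem (Proposition~\ref{cor:sep-james-boundary}) applied to the separable boundary, and polyhedrality comes from Grothendieck's lemma on norm-compact sets --- exactly the result \cite[Proposition 1.e.2]{lts} that the paper invokes --- followed by Proposition~\ref{cor:fonf}. The only difference is cosmetic: you make explicit the bookkeeping (adjoining $0$, ensuring each $K_j$ is countably infinite and $w^*$-compact, and noting that $\cconv^{w^*}(K_j)$ contains the norm-closed convex hull) that the paper leaves implicit.
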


\begin{proof}
Let the boundary $W$ of $X$ be covered by $\bigcup_{j\in\mathbb{N}} W_j$ with $W_j$ compact sets. Then, the boundary is separable, so by Godefroy's result (Proposition \ref{cor:sep-james-boundary}), $X^*$ is separable, and then $X$ is separable as well.

According to \cite[Proposition 1.e.2]{lts}, every compact subset $W_j$ is included in a subset of the form $\cconv \{x^*_{j,k}\}_{k \in \N}\subset X^*$ where $\|x^*_{j,k}\| \xrightarrow[k \to \infty]{} 0$. Thus, the boundary $W$ has the property from Proposition \ref{cor:fonf}, so \(X\) is isomorphically polyhedral. 
\end{proof}

Remark that the same result follows from an ``internal'' characterization from \cite{Fonf90}.

\begin{theorem}\label{proposition:sigma-compact}
Let $M$ be a $\sigma$-precompact pointed metric space, then all Banach subspaces in $\sna(M)$ are separable and isomorphic to polyhedral spaces.
\end{theorem}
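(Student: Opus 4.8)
The plan is to reduce Theorem \ref{proposition:sigma-compact} to Corollary \ref{cor:separable} by producing, for a closed subspace $Z\subset\sna(M)$, a James boundary of $Z$ that is covered by countably many compact sets. The natural candidate for such a boundary comes from the functionals $\frac{\delta_x-\delta_y}{\rho(x,y)}\in\mathcal{F}(M)$ at which the elements of $Z$ strongly attain their norm: since every $f\in Z$ lies in $\sna(M)$, there is a pair $x\neq y$ with $\left|\left\langle f,\tfrac{\delta_x-\delta_y}{\rho(x,y)}\right\rangle\right|=\|f\|$, so the \emph{set of all normalized molecules} $\Mol(M):=\left\{\tfrac{\delta_x-\delta_y}{\rho(x,y)}:x\neq y\in M\right\}$, restricted to $Z$ (that is, viewed as functionals on $Z$ via $R:\mathcal{F}(M)^{*}=\lip(M)\to Z^{*}$ composed with the embedding $\Mol(M)\subset\mathcal{F}(M)\subset Z^{*}$) furnishes a James boundary of $Z$ after symmetrizing to absorb signs. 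So the key reduction is: treat $Z$ as a Banach space whose dual contains the restricted molecules as a boundary, and apply Corollary \ref{cor:separable} to $Z$.

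First I would set up the framework carefully: let $X=\mathcal{F}(M)$, so $\lip(M)=X^{*}$ and $Z$ is a closed subspace of $X^{*}$ consisting of norm-attaining functionals attaining at normalized molecules. Let $R:X=\mathcal{F}(M)\to Z^{*}$ be the restriction-of-evaluation map, i.e.\ $R(\mu)(f)=\langle f,\mu\rangle$ for $\mu\in\mathcal{F}(M)$, $f\in Z$; this is norm-one. Then $\{\pm R(m):m\in\Mol(M)\}$ is a James boundary of $Z$, because for each $f\in Z$ strong norm-attainment at some molecule $m$ gives $\pm R(m)(f)=\|f\|$. Thus it suffices to show $R(\Mol(M))$ can be covered by countably many norm-compact subsets of $Z^{*}$ (the symmetrization only doubles the count).

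The main work, and the step I expect to be the principal obstacle, is turning $\sigma$-precompactness of $M$ into norm-compactness of countably many pieces of $R(\Mol(M))$ in $Z^{*}$. The natural decomposition is $M=\bigcup_{n}C_{n}$ with each $C_{n}$ precompact, which slices $\Mol(M)$ according to which precompact pieces the two points $x,y$ fall into; but molecules with $\rho(x,y)$ small (points very close together) are the dangerous ones, since $\frac{\delta_x-\delta_y}{\rho(x,y)}$ need not vary continuously as $\rho(x,y)\to 0$ and the denominators blow up. The remedy is to intersect the precompact slicing with a further slicing by the scale of separation: write $\Mol(M)=\bigcup_{n,k}\mathcal M_{n,k}$ where $\mathcal M_{n,k}$ collects molecules with $x,y\in C_{n}$ and $\rho(x,y)\ge 1/k$. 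On each such piece the map $(x,y)\mapsto R\!\left(\frac{\delta_x-\delta_y}{\rho(x,y)}\right)$ is a norm-continuous image of a precompact set: $C_n\times C_n$ intersected with $\{\rho(x,y)\ge1/k\}$ is precompact, the molecule map is Lipschitz there (denominator bounded below by $1/k$, and $\mu\mapsto R(\mu)$ is $1$-Lipschitz), so the closure of $R(\mathcal M_{n,k})$ in $Z^{*}$ is norm-compact. Countably many such pieces cover $R(\Mol(M))$.

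Having produced the countable compact cover of a boundary of $Z$, I would invoke Corollary \ref{cor:separable} applied to the Banach space $Z$: its boundary $\{\pm R(m)\}$ is covered by countably many compact sets, hence $Z$ is separable and isomorphically polyhedral. Since this holds for every closed subspace $Z\subset\sna(M)$, the theorem follows. The one point deserving care in a full write-up is verifying that the molecule map is genuinely norm-continuous (equivalently Lipschitz) on each $\mathcal M_{n,k}$: one checks $\left\|\frac{\delta_x-\delta_y}{\rho(x,y)}-\frac{\delta_{x'}-\delta_{y'}}{\rho(x',y')}\right\|_{\mathcal{F}(M)}\to 0$ as $(x',y')\to(x,y)$ using $\|\delta_a-\delta_b\|_{\mathcal{F}(M)}=\rho(a,b)$ together with the lower bound $\rho\ge 1/k$ on the denominators, and then compose with the $1$-Lipschitz map $R$.
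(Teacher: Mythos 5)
Your proposal is correct and follows essentially the same route as the paper: there, too, the restricted molecules $R(\Mol)$ serve as a James boundary of the subspace and a countable compact cover of them is fed into Corollary \ref{cor:separable}, the only difference being that the paper obtains the compact pieces as $k\cdot\overline{\mathrm{aconv}}(\Delta_n-\Delta_m)$ with $\Delta_n=\{\delta_x:x\in M_n\}$ (closed absolutely convex hulls of precompact sets), which encodes the same cutoff $\rho(x,y)\ge 1/k$ that you enforce via your direct Lipschitz estimate for the molecule map on the slices. One cosmetic fix for your write-up: index the slices by pairs of precompact pieces, or pass to an increasing cover $C_1\subset C_2\subset\cdots$, so that molecules whose endpoints lie in different pieces are also covered (and note that $\Mol$ is already symmetric under swapping $x$ and $y$, so no separate symmetrization is needed).
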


\begin{proof}
Let $\{M_n\}_{n=1}^{+\infty}$ be a sequence of precompact sets such that $M\subset \bigcup_{n=1}^{+\infty} M_n$. For each $n\in\mathbb{N}$, denote $\Delta_n = \{\delta_x: x \in M_n\} \subset \lip(M)^*$. By our assumption, each $\Delta_n$ is precompact in $\lip(M)^*$. Then $\overline{\mathrm{aconv}} (\Delta_n - \Delta_m)$ is compact for every $n,m\in\mathbb{N}$. The set
$$\Mol = \left\{\frac{\delta_t-\delta_{\tau}}{\rho(t,\tau)}:\, t \neq \tau\in M\right\} \subset \bigcup_{m,n,k\in\mathbb{N}} k \cdot \overline{\mathrm{aconv}} (\Delta_n - \Delta_m)$$ is covered by a countable number of norm-compact sets. 

Let $Y \subset \sna(M)$ be a Banach space. Denote $R: \lip(M)^* \to Y^*$ the natural restriction operator. Then by the continuity of $R$, $R(\Mol)$ is covered by a countable number of norm-compact sets as well. The set $R(\Mol) \bigcap S_{Y^*}$ forms a boundary for $Y$ (by the definition of strong norm-attainment),  so the statement follows from Corollary \ref{cor:separable}.
\end{proof}

Note that every compact space and every $\mathbb{R}^n$, with $n\in\mathbb{N}$, is $\sigma$-compact. In particular, every linear subspace in $\sna([0,1])$ is separable and isomorphically polyhedral. It is worth noting that such subspaces can be infinite-dimensional, as we will see in Example \ref{ex:[0,1]}.

Since all Lipschitz functions are absolutely continuous, one can identify (see for instance \cite[Example 1.6.5]{Weaver99}) the space $\lip([0,1])$ isometrically with the space $L_\infty([0,1])$, where the isometric isomorphism between them is just the differenciation operator (which exists almost everywhere):
\begin{align*}
    U: \lip([0,1)] & \rightarrow L_\infty([0,1])\\
    f &\mapsto U(f)=f'.
\end{align*}

It is clear from this and \cite[Lemma 2.2]{KMS16} that $U(\sna([0,1]))$ is the subset of $L_\infty([0,1])$ consisting on functions that attain their norm $\|\cdot\|_\infty$ throughout an interval with non-empty interior. We get the following result.

\begin{example}\label{ex:[0,1]}
If $M=[0,1]$, then $\sna(M)$ contains linear subspaces isometrically isomorphic to $c_0$.
\end{example}

\begin{proof}
Consider the set $A$ of functions $g:[0,1]\rightarrow \mathbb{R}$ such that there exists some $a=(a_1, a_2, \ldots)\in c_0$ such that
$$\left\{\begin{array}{l}
g(x) = a_k,\quad \text{if } x\in \left[ 1-\frac{1}{k}, 1-\frac{1}{k+1}\right),\ k=1,2,\ldots \\
g(1) = 0  
\end{array}\right.$$

Then $(A, \| \cdot \|_\infty)$ is a linear subspace of $U(\sna([0,1]))$ which is isometrically isomorphic to $c_0$, and that finishes the proof.
\end{proof}

Naturally, the previous example remains true if one changes $[0,1]$ with $[a,b]$, where $a<b\in\mathbb{R}$. 

Finally, the next result shows that one can extend the existence of $c_0$ in $\sna([0,1])$ to $\sna(M)$ for any pointed metric space $M$ that contains $[0,1]$ isometrically (for instance, any normed space).

\begin{proposition}\label{proposition:containing-[0,1]}
If $M$ is any pointed metric space containing $[0,1]$ isometrically, then $\sna(M)$ contains linear subspaces isometrically isomorphic to $c_0$.
\end{proposition}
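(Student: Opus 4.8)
The plan is to transport the copy of $c_0$ that already lives in $\sna([0,1])$ by Example \ref{ex:[0,1]} up to $M$ by composing with a $1$-Lipschitz retraction of $M$ onto $[0,1]$. Denote by $J\colon[0,1]\to M$ the isometric embedding. Since by the remark in Section \ref{section:introduction} the pair $(\lip(M),\sna(M))$ does not depend on the choice of the distinguished point, I would first assume without loss of generality that the distinguished point $0$ of $M$ equals $J(0)$, and henceforth identify $[0,1]$ with its image $J([0,1])\subset M$.

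The key step is to produce a map $r\colon M\to[0,1]$ that is $1$-Lipschitz and satisfies $r|_{[0,1]}=\mathrm{id}_{[0,1]}$. To build it, note that the inclusion $\iota\colon[0,1]\to\mathbb{R}$, $\iota(t)=t$, lies in $\lip([0,1])$ with $\iota(0)=0$ and $\|\iota\|=1$, so McShane's extension theorem yields $\tilde r\in\lip(M)$ with $\tilde r|_{[0,1]}=\iota$ and $\|\tilde r\|=1$. I then set $r(x)=\max\{0,\min\{1,\tilde r(x)\}\}$; since the clamping map $s\mapsto\max\{0,\min\{1,s\}\}$ is a $1$-Lipschitz retraction of $\mathbb{R}$ onto $[0,1]$, the composition $r$ is a $1$-Lipschitz map from $M$ into $[0,1]$, and for $t\in[0,1]$ one has $\tilde r(t)=t\in[0,1]$, whence $r(t)=t$. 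Thus $r$ is the required retraction.

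With $r$ in hand, I would define the pullback $\Phi\colon\lip([0,1])\to\lip(M)$ by $\Phi(f)=f\circ r$. It is plainly linear, and the estimate $|f(r(x))-f(r(y))|\le\|f\|\,|r(x)-r(y)|\le\|f\|\,\rho(x,y)$ gives $\|\Phi(f)\|\le\|f\|$, while the reverse inequality follows because $\Phi(f)$ restricts to $f$ on $[0,1]$ and the Lipschitz seminorm over a subset cannot exceed the one over the whole space; hence $\Phi$ is a linear isometric embedding (and $\Phi(f)(0)=f(0)=0$). Moreover, if $f$ attains its norm strongly at a pair $t_1\neq t_2$ in $[0,1]$, then $\Phi(f)(t_i)=f(t_i)$ forces $\Phi(f)$ to attain its norm strongly at the same pair viewed in $M$, so $\Phi(\sna([0,1]))\subset\sna(M)$. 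Applying $\Phi$ to the subspace $A\cong c_0$ of $\sna([0,1])$ furnished by Example \ref{ex:[0,1]} then yields a subspace $\Phi(A)\subset\sna(M)$ that is isometric to $c_0$ (and automatically closed, being complete), which is exactly what is claimed.

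The only genuinely delicate point is the construction of $r$: one must simultaneously keep the map valued in $[0,1]$, keep it $1$-Lipschitz, and keep it the identity on $[0,1]$. The median/clamp trick applied to an $\mathbb{R}$-valued McShane extension resolves all three requirements at once, so after that step the argument is a routine verification and I expect no further obstacle.
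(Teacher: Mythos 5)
Your proposal is correct and follows essentially the same route as the paper: both pull back the $c_0$-subspace of $\sna([0,1])$ from Example \ref{ex:[0,1]} along a $1$-Lipschitz retraction $M\to[0,1]$ obtained via McShane's extension theorem. In fact you are slightly more careful than the published proof, which omits the clamping step $s\mapsto\max\{0,\min\{1,s\}\}$ needed to force the McShane extension to take values in $[0,1]$, and which leaves the isometry of the pullback and the transfer of strong norm-attainment as implicit verifications that you spell out explicitly.
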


\begin{proof}
Let $M$ and $Z$ be metric spaces such that $Z\subset M$. Assume that there exists some projection $F:M\rightarrow Z$ with Lipschitz constant $1$, that is:
$$\begin{cases}
F(z)=z,\quad &\text{for all $z\in Z$},\\
\rho(F(a), F(b)) \leq \rho(a, b),\quad &\text{for all $a,b\in M$}.
\end{cases}$$
Let $T: \lip(Z)\rightarrow \lip(M)$ be such that for all $f\in \lip(Z)$, $T(f):=f\circ F$. Thus, for all $x\in M$, we have $(T(f))(x)=f(F(x))$. It is clear that $T$ is linear. Moreover, $T(\lip(Z))$ is a subspace of $\lip(M)$. Hence any linear subspace of $\lip(Z)$ yields a subspace of $\lip(M)$.

All that remains is to note that if $X$ is any metric space containing $[0,1]$ isometrically, then the mapping $F$ exists. Indeed, the identity operator $\operatorname{Id}$ on $[0,1]$ is a Lipschitz function with constant $1$, and by McShane's extension theorem, it can be extended to the whole $X$ preserving its Lipschitz constant.
\end{proof}

In particular, this holds for all normed spaces. This should be once more compared with the classical theory of norm-attaining functionals, where there exist Banach spaces $X$ such that $\text{NA}(X, \mathbb{R})$ does not have $2$-dimensional subspaces (see \cite{Rmoutil17}).

\section{Open Questions and remarks}\label{section:questions}

We have seen in Theorem \ref{theorem:finite-dimensional-subspace} that if $M$ is any infinite pointed metric space, then $\sna(M)$ contains linear subspaces of every possible finite dimension. However, we do not know if every infinite pointed metric space $M$ satisfies that $\sna(M)$ contains infinite-dimensional linear subspaces, and Theorem \ref{theorem:inverse-problem} gives us some restrictions.

\begin{question}\label{question:inf-dim-subs}
Is it true that for every infinite complete pointed metric space  $M$ the corresponding $\sna(M)$ contains infinite-dimensional closed (or at least non-closed) linear subspaces?
\end{question}

Note that for Question \ref{question:inf-dim-subs}, there is no chance to have isometric copies of $\ell_1$ as a tool for its solution in general, since if the space $M$ is separable, the candidates for linear subspaces of $\sna(M)$ need to have separable dual (see Theorem \ref{theorem:inverse-problem}). Actually, in all the examples of infinite $M$ that we were able to analyze in depth, $\sna(M)$ contains isomorphic copies of $c_0$, but we do not know if this is true in general.

\begin{question}\label{question:c0-subs}
Is it true that for every infinite complete pointed metric space  $M$ the corresponding $\sna(M)$ contains an isomorphic copy of $c_0$?
\end{question}

Let us comment that, as it was mentioned in Section \ref{subsection:motivation}, in \cite[Theorem 3.2]{CDW16} it was shown that if $M$ is an infinite metric space, then $\lip(M)$ contains a linear subspace isomorphic to $\ell_\infty$, and an isometric version was achieved later in \cite[Theorem 5]{CJ17}, however, if one examines the proofs, there is no chance for them to be applicable to $\sna(M)$ in general, and actually, we refer once more to Theorem \ref{theorem:inverse-problem}.

It has been shown in Theorem \ref{proposition:sigma-compact} that if $M$ is a $\sigma$-precompact pointed metric space, then all subspaces of $\sna(M)$ are separable and isomorphically polyhedral. However, the authors do not know if the same holds for the weaker small ball property.

\begin{question}\label{question:sbp}
Let $M$ be a pointed metric space with the small ball property. Is it true that all subspaces of $\sna(M)$ are separable and isomorphically polyhedral?
\end{question}

\vspace{\baselineskip}

\noindent \textbf{Acknowledgments.} The authors are grateful to Miguel Mart\'in and Mikhail Ostrovskii for useful discussions on the topic. This research was partially done when the second author was visiting V.~N.~Karazin Kharkiv National University (Kharkiv, Ukraine), and he is very thankful for the hospitality that he received there and he wishes his Ukranian friends and colleagues the best of luck in the terrible and unfair circumstances that their country is going through at the time of writing this text.


\end{document}